\numberwithin{figure}{section}
\numberwithin{table}{section}
\numberwithin{equation}{section}
\newcommand\xleftrightarrow[2][]{\ext@arrow 0099{\longleftrightarrowfill@}{#1}{#2}}
\def\longleftrightarrowfill@{\arrowfill@\leftarrow\relbar\rightarrow}
\numberwithin{table}{section}
\numberwithin{equation}{section}
\theoremstyle{plain}
\newtheorem{theorem}{Theorem}
\newtheorem{proposition}{Proposition}
\newtheorem{corollary}{Corollary}
\theoremstyle{definition}
\newtheorem{definition}{Definition}
\newtheorem{remark}{Remark}
\author[1,*]{ \textbf{Noel T. Fortun}}
\author[1]{\textbf{Angelyn R. Lao}}
\author[2]{\textbf{Luis F. Razon}}
 \author[1,3,4,5]{\textbf{Eduardo R. Mendoza}}
\affil[1]{\small \textit{Mathematics and Statistics Department, De La Salle University, Manila  0922, Philippines}}
\affil[2]{\textit{Chemical Engineering Department, De La Salle University, Manila  0922, Philippines }}
\affil[3]{\textit{Institute of Mathematical Sciences and Physics, University of the Philippines,  Los Ba\~{n}os, Laguna 4031, Philippines}}
\affil[4]{\textit{Max Planck Institute of Biochemistry, Martinsried near Munich, Germany}}
\affil[5]{\textit{Faculty of Physics, Ludwig Maximilian University, Munich 80539, Germany}}
\affil[*]{Corresponding author: \texttt{noel.fortun@dlsu.edu.ph}}
\title{\textbf{Robustness in power law kinetic systems with reactant-determined interactions}}
\date{}
\begin{document}

\maketitle
\begin{abstract}
Robustness against the presence of environmental disruptions can be observed in many systems of chemical reaction network. However, identifying the underlying components of a system that give rise to robustness is often elusive. The influential work of Shinar and Feinberg established simple yet subtle network-based conditions for absolute concentration robustness (ACR), a phenomenon in which a species in a mass-action system has the same concentration for any positive steady state the network may admit. In this contribution, we extend this result to embrace kinetic systems more general than mass-action systems, namely, power law kinetic systems with reactant-determined interactions (denoted by ``PL-RDK"). In PL-RDK, the kinetic order vectors of reactions with the same reactant complex are identical. As illustration, we considered a scenario in the pre-industrial state of global carbon cycle. A power law approximation of the dynamical system of this scenario is found to be dynamically equivalent to an ACR-possessing PL-RDK system. \\

\noindent \textbf{Keywords.} Absolute concentration robustness , Chemical reaction network, Power law kinetics,  Reactant-determined interactions, Carbon cycle model 
\end{abstract}
\section{Introduction}
Robustness may be generally defined \cite{KITANO2004,SHINAR2010}
as a system-level dynamical property that allows a system to sustain its functions despite changes in internal and external conditions. This feature, in fact, is fundamental and ubiquitous in many biological processes, including cellular networks and entire organisms \cite{AEJ2014,KITANO2004,SHINAR2010}. One type of robust behavior is ``concentration robustness,'' wherein some quantity involving the concentrations of the different species in a network is fixed at equilibrium \cite{DEXTER2015}.  In a well-cited paper published in \textit{Science}, Shinar and  Feinberg \cite{SHINAR2010} introduced \textit{absolute concentration robustness} (ACR), a condition in which the concentration of a species in a network attains the same value in every positive steady state set by parameters and does not depend on  initial conditions.

Shinar and Feinberg presented sufficient structure-based conditions for a chemical reaction network (CRN) to display ACR on a particular species through a structural index called the \textit{deficiency}. This non-negative parameter has been the center of many powerful results in \textit{Chemical Reaction Network Theory} (CRNT), a theoretical body of work that associates the structure of a CRN to the  dynamical behaviour of the system \cite{FEIN1979,FEIN1995}. CRNT employs mathematical methods from graph theory, linear algebra, group theory and the theory of ordinary differential equations. In CRNT, chemical reaction networks are viewed as digraphs whose vertices (called \textit{complexes}) are mapped to non-negative vectors representing compositions of chemical species and whose arcs represent chemical reactions between them.  The Shinar-Feinberg Theorem on ACR holds for systems whose evolution are modelled by ordinary differential equations with mass-action kinetics (MAK), and is stated as follows:
\begin{quotation}
\noindent \textit{Consider a mass-action system that admits a positive steady state and suppose that the deficiency of the underlying reaction network is one. If there are two nonterminal nodes in the network that differ only in species $S$, then the system has absolute concentration robustness in $S$.}
\end{quotation}

Here, we show that this result extends to systems endowed with \textit{power law kinetics} (PLK), which generalize mass-action kinetics \cite{CLARKE1988,HORNJACK1972}. Several experiments have shown that the kinetic order of a reaction with respect to a given reactant is a function of the geometry within which the reaction occurs \cite{KOPELMAN1986,KOPELMAN1988,KOPELMAN1991,NEWHOUSE1988,SCHNELL2004}. 
In the case of reactions occurring within a three-dimensional homogenous space (as in mass-action systems), the kinetic order is the same as the number of molecules entering into the reaction. However, for systems characterized by molecular overcrowding (e.g., when other molecules deny the reactants from the supposedly allowable space, and to stickiness, when the reactants are found along the surfaces of the reaction vessel) the kinetic orders for the reactions can exhibit non-integer values  \cite{SAVAGEAU1998} found in power law formalism \cite{SAV1969,VOIT2000,VOIT2013}. For instance, in intracellular environments, which are highly structured and characterized by molecular crowding, reactions in vivo are likely to take place on membranes or channels and as such, reactions follow fractal-like kinetics \cite{BAJZER2008,CLEGG1992,KUTHAN2001,SCHNELL2004}. 
The presence of power law kinetics in reaction systems thus motivated CRN-based studies on PLK systems  (\cite{CORTEZ2018,FORTUNa,MURE2012,TAM2018} among others), some of which are extensions or modifications of existing results on MAK systems. 

This contribution specifically shows that the result of Shinar and Feinberg on ACR applies to a class of PLK system called\textit{ power law kinetic systems with reactant-determined interactions} (denoted by ``PL-RDK''). PL-RDK systems are kinetic systems with power law rate functions whose kinetic order vectors are identical for reactions with the same reactant complex. Since the kinetic orders of the mass-action rate functions are precisely the stoichiometric coefficients of the reactant complex, one can see that MAK is a special case of PL-RDK.

As an application, we employ the theorem to a power law approximation of the ODE system corresponding to a specific scenario in the  pre-industrial carbon cycle model developed by Anderies et al. \cite{ANDERIES}. Particulary, for the pre-industrial scenario where there are anthropogenic causes that reduce the capacity of terrestrial carbon pool to store carbon, the power law approximation leads to an ACR-possessing PL-RDK system.  

The rest of the paper is organized as follows: Section 2 assembles preliminary concepts in Chemical Reaction Network Theory required in stating and proving the results. Section 3 discusses the extension of the Shinar-Feinberg Theorem on ACR for PL-RDK systems. Section 4 applies the main result obtained from the previous section to a carbon cycle model.  In Section 5, we summarize our results and outline some research perspectives.

\section{Fundamentals of Chemical Reaction Networks and Kinetic Systems}

We recall some fundamental notions about chemical reaction networks (CRNs) and
chemical kinetic systems (CKS) assembled in \cite{AJMSM2015,TAM2018}. Some concepts introduced by Feinberg in \cite{FEIN1979,FEIN1995} are also reviewed. \\

\noindent \textbf{Notation.} We denote the real numbers by $\mathbb{R}$, the non-negative real numbers by $\mathbb{R}_{\geq0}$ and the positive real numbers by $\mathbb{R}_{>0}$.  Objects in the reaction systems are viewed as members of vector spaces. Suppose $\mathscr{I}$ is a finite index set. By $\mathbb{R}^\mathscr{I}$, we mean the usual vector space of real-valued functions with domain $\mathscr{I}$.  For $x \in \mathbb{R}^\mathscr{I}$, the $i^\text{th}$ coordinate of $x$ is denoted by $x_i$, where $i \in \mathscr{I}$. The sets $\mathbb{R}_{\geq 0}^\mathscr{I}$ and $\mathbb{R}_{>0}^\mathscr{I}$ are called the \textit{non-negative} and \textit{positive orthants} of $\mathbb{R}^\mathscr{I}$, respectively. Addition, subtraction, and scalar multiplication in $\mathbb{R}^\mathscr{I}$are defined in the usual way. If $x \in \mathbb{R}_{>0}^\mathscr{I}$ and $y \in \mathbb{R}^\mathscr{I}$, we define $x^y \in \mathbb{R}_{>0}$ by
$
x^y= \prod_{i \in \mathscr{I}} x_i^{y_i} .
$
The vector $\log x\in \mathbb{R}^\mathscr{I}$,where $x \in \mathbb{R}_{>0}^\mathscr{I}$, is given by 
$(\log x)_i = \log x_i,  \text{ for all } i \in \mathscr{I}.$ If $x,y \in  \mathbb{R}^\mathscr{I}$, the standard scalar product $x\cdot y \in  \mathbb{R}$ is defined by 
$
x \cdot y = \sum_{i \in \mathscr{I}} x_i y_i.
$ 
By the \textit{support} of $x \in \mathbb{R}^\mathscr{I}$, denoted by $\text{supp } x$, we mean the subset of $\mathscr{I}$ assigned with non-zero values by $x$. That is,
$
\text{supp } x := \{ i \in \mathscr{I} | x_i \neq 0 \}.
$ \\

\begin{definition}
A \textbf{chemical reaction network} (CRN) $\mathscr{N}$ is a triple $(\mathscr{S},\mathscr{C},\mathscr{R})$ of three finite sets:
\begin{enumerate}
\item a set $\mathscr{S}= \{X_1, X_2, \dots, X_m \}$ of \textbf{species};
\item a set $\mathscr{C} \subset \mathbb{R}^\mathscr{S}_{\geq 0}$ of \textbf{complexes};
\item a set $\mathscr{R} = \{R_1, R_2, \dots, R_r \}\subset \mathscr{C} \times \mathscr{C}$ of \textbf{reactions} such that $(y,y) \notin \mathscr{R}$ for any $y \in \mathscr{C}$, and  for each $y \in \mathscr{C}$, there exists $y' \in \mathscr{C}$ such that either $(y,y') \in \mathscr{R}$ or  $(y',y) \in \mathscr{R}$.
\end{enumerate} 
We denote the number of species with $m$, the number of complexes with $n$ and the number of reactions with $r$
\end{definition}

A CRN can be viewed as a digraph $(\mathscr{C},\mathscr{R})$ with vertex-labelling. In particular, it is a digraph where each vertex $y\in \mathscr{C}$ has positive degree and stoichiometry, i.e., there is a finite set $\mathscr{S}$ of species  such that $\mathscr{C}$ is a subset of $\mathbb{R}^{\mathscr{S}}_{\geq 0}$. The vertices are the complexes whose coordinates are in $\mathbb{R}^{\mathscr{S}}_{\geq 0}$, which  are the \textbf{stoichiometric coefficients}. The arcs are precisely the reactions.

We use the convention that an element $R_j = (y_j, y_j') \in \mathscr{R}$ is denoted by $R_j: y_j \rightarrow y_j' $. In this reaction, we say that $y_j$ is the \textbf{reactant} complex and $y'_j$ is the \textbf{product} complex. Connected components of a CRN are called \textbf{linkage classes}, strongly connected components are called \textbf{strong linkage classes}, and strongly connected components without outgoing arcs are called \textbf{terminal strong linkage classes}. We denote the number of linkage classes with $\ell$, that of the strong linkage classes with $s\ell$, and that of terminal strong linkage classes with $t$. A complex is called \textbf{terminal} if it belongs to a terminal strong linkage class; otherwise, the complex is called \textbf{nonterminal}. 

With each reaction $y\rightarrow y'$, we associate a \textbf{reaction vector} obtained by subtracting the reactant complex $y$ from the product complex $y'$. The \textbf{stoichiometric subspace} $S$ of a CRN is the linear subspace of $\mathbb{R}^\mathscr{S}$ defined by
$$S := \text{span }\{y' - y \in \mathbb{R}^\mathscr{S}| y\rightarrow y' \in \mathscr{R}\}.$$
The \textbf{rank} of the CRN, $s$, is defined as $s = \dim S$. 

Many features of CRNs can be examined by working in terms of finite dimensional spaces $\mathbb{R}^\mathscr{S}$ (species space), $\mathbb{R}^\mathscr{C}$ (complex space), and $\mathbb{R}^\mathscr{R}$ (reaction space). Suppose the set $\{ \omega_i \in \mathbb{R}^\mathscr{I} \mid i \in \mathscr{I} \}$ forms the \textit{standard basis} for $\mathbb{R}^\mathscr{I}$ where $\mathscr{I}=\mathscr{S,C}$ or $\mathscr{R}$. We recall four maps relevant in the study of CRNs: map of complexes, incidence map, stoichiometric map and Laplacian map. 
\begin{definition}
Let $\mathscr{N}=(\mathscr{S,C,R})$ be a CRN. 
\begin{enumerate}
\item The \textbf{map of complexes} $\displaystyle{Y: \mathbb{R}^\mathscr{C} \rightarrow \mathbb{R}^\mathscr{S}}$ maps the basis vector $\omega_y$ to the complex $ y \in \mathscr{C}$. 
\item The \textbf{incidence map} $\displaystyle{I_a : \mathbb{R}^\mathscr{R} \rightarrow \mathbb{R}^\mathscr{C}}$ is the linear map defined by mapping for each reaction $\displaystyle{R_j: y_j \rightarrow y_j' \in \mathscr{R}}$, the basis vector $\omega_j$ to the vector $\omega_{y_j'}-\omega_{y_j} \in \mathscr{C}$. 
\item The \textbf{stoichiometric map} $\displaystyle{N: \mathbb{R}^\mathscr{R} \rightarrow \mathbb{R}^\mathscr{S}}$ is defined as $N = Y \circ I_a$. 
\item For each $k \in \mathbb{R}^\mathscr{R}_{>0}$ , the linear transformation  $A_k : \mathbb{R}^\mathscr{C} \rightarrow \mathbb{R}^\mathscr{C}$ called \textbf{Laplacian map} is the mapping defined by 
$$A_k x:= \sum_{y \rightarrow y'\in\mathscr{R}}k_{y \rightarrow y'}x_y (\omega_{y'} -\omega_y),$$
where $x_y$ refers to the $y^\text{th}$ component of $x \in \mathbb{R}^\mathscr{C}$ relative to the standard basis. 
\end{enumerate}
\end{definition}

The following result, named as the \textit{Structure Theorem of the Laplacian Kernel} (STLK) by Arceo et al. in \cite{AJMSM2015}, is crucial in deriving  important results in CRNT \cite{FEIN1979,FEIN1995}. 

\begin{proposition}[Structure Theorem of the Laplacian Kernel (STLK), Prop. 4.1 \cite{FEIN1979}]\label{th: STLK}
Let $\mathscr{N}=\mathscr{(S,C,R)}$ be a CRN with terminal strong linkage classes $\mathscr{C}^1, \mathscr{C}^2, \dots, \mathscr{C}^t$. Let $k\in \mathbb{R}^\mathscr{R}_{>0}$ and $A_k$ its associated Laplacian. Then $\text{Ker } A_k$ has a basis $b^1,b^2,\dots,b^t$ such that $\text{supp }b^i = \mathscr{C}^i$ for all $i=1,2,\dots,t$.
\end{proposition}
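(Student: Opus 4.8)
The plan is to prove the two dimension estimates $\dim(\text{Ker } A_k)\ge t$ and $\dim(\text{Ker } A_k)\le t$, exhibiting the required basis vectors en route. Throughout I identify $A_k$ with its matrix in the standard basis $\{\omega_y\}_{y\in\mathscr{C}}$, so that the column indexed by $y$ is $A_k\omega_y=\sum_{y\to y'}k_{y\to y'}(\omega_{y'}-\omega_y)$; for a strong linkage class $\Lambda$ I write $A_k^\Lambda$ for the principal submatrix of $A_k$ on the rows and columns indexed by $\Lambda$.

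The first step records a triangular structure. Enumerate the strong linkage classes $\Lambda_1,\dots,\Lambda_{s\ell}$ along a topological order of the condensation digraph, so that a reaction from $\Lambda_a$ into a distinct class $\Lambda_b$ forces $a<b$. Since every reaction out of a complex $y\in\Lambda_a$ lands inside $\Lambda_a$ or inside some $\Lambda_b$ with $b>a$, in this block ordering $A_k$ is block lower triangular with diagonal blocks $A_k^{\Lambda_1},\dots,A_k^{\Lambda_{s\ell}}$. When $\Lambda$ is \emph{terminal}, no reaction leaves it, so $A_k$ maps $\mathrm{span}\{\omega_y:y\in\Lambda\}$ into itself and $A_k^\Lambda$ is exactly the weighted Laplacian of the strongly connected sub-digraph on $\Lambda$; for such a Laplacian, $\text{Ker } A_k^\Lambda$ is one-dimensional and spanned by a strictly positive vector (the Matrix--Tree Theorem, or Perron--Frobenius applied to $A_k^\Lambda+cI$ for large $c$, a nonnegative irreducible matrix all of whose columns sum to $c$). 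Extending such a generator by zeros produces, for each terminal class $\mathscr{C}^i$, a vector $b^i\in\text{Ker } A_k$ with $\text{supp } b^i=\mathscr{C}^i$; the $b^i$ have pairwise disjoint supports and are therefore linearly independent, so $\dim(\text{Ker } A_k)\ge t$.

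For the opposite inequality, block lower triangularity yields $\dim(\text{Ker } A_k)\le\sum_a\dim(\text{Ker } A_k^{\Lambda_a})$, so it suffices to show $A_k^\Lambda$ is nonsingular for every \emph{nonterminal} $\Lambda$. Fix such a $\Lambda$. In $-A_k^\Lambda$ the $y$-th diagonal entry equals $\sum_{y\to y'}k_{y\to y'}>0$ (the total outflow rate of $y$ over all its reactions, which is positive because a complex of a nonterminal strong linkage class has an outgoing reaction), while the $(y',y)$ entry with $y'\ne y$ equals $-k_{y\to y'}\le 0$ for reactions $y\to y'$ internal to $\Lambda$ and $0$ otherwise. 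Thus $-A_k^\Lambda$ is diagonally dominant by columns, strictly so in the column of any complex of $\Lambda$ having a reaction that leaves $\Lambda$ (such a complex exists since $\Lambda$ is nonterminal), and it is irreducible, its associated digraph being the strongly connected sub-digraph of the reactions internal to $\Lambda$; by the classical criterion for irreducibly diagonally dominant matrices, $-A_k^\Lambda$ is therefore nonsingular. Hence $\dim(\text{Ker } A_k)=t$, the $t$ linearly independent vectors $b^1,\dots,b^t$ form a basis, and $\text{supp } b^i=\mathscr{C}^i$ by construction.

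The step I expect to be the real obstacle is this nonsingularity of the diagonal blocks attached to the nonterminal strong linkage classes; everything else is bookkeeping (the block-triangular form, disjointness of supports) together with standard facts about weighted Laplacians of strongly connected digraphs. To stay closer to Feinberg's original reasoning and bypass the passage through irreducibility, one could instead show directly that every $x\in\text{Ker } A_k$ is supported on $\bigcup_i\mathscr{C}^i$, via a ``flow-chasing'' maximum-principle argument down the condensation digraph, and then split $x=\sum_i x^{(i)}$ with $x^{(i)}$ supported on $\mathscr{C}^i$ and observe that $A_k x=\sum_i A_k^{\mathscr{C}^i}x^{(i)}=0$ forces each $x^{(i)}\in\text{span}\{b^i\}$.
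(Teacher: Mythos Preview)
The paper does not itself prove this proposition; it is quoted from Feinberg's 1979 lecture notes (Prop.~4.1 there) and used as a black box in the proof of Proposition~\ref{theorem:1}. There is therefore no in-paper argument to compare against.

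Your proof is correct. The block-lower-triangular decomposition of $A_k$ along a topological order of the condensation, the one-dimensional strictly positive kernel of each terminal diagonal block via Perron--Frobenius (or the Matrix--Tree Theorem), and the nonsingularity of each nonterminal diagonal block by irreducible column diagonal dominance all go through exactly as you state. Two minor remarks: (i) for the inequality $\dim\text{Ker }A_k\le\sum_a\dim\text{Ker }A_k^{\Lambda_a}$ you are implicitly using that a block-triangular matrix has rank at least the sum of the ranks of its diagonal blocks, which is elementary but worth making explicit; (ii) your claim that every complex in a nonterminal strong linkage class has an outgoing reaction deserves one line of justification (a complex with no outgoing reaction is itself a singleton terminal strong linkage class).

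For comparison with the original source: the alternative you sketch in your last paragraph---show directly, by a maximum-principle/flow-chasing argument on the condensation, that every $x\in\text{Ker }A_k$ is supported on the union of terminal classes, then split over the terminal classes and invoke the strongly connected Laplacian kernel---is essentially the line Feinberg follows in \cite{FEIN1979}. Your primary argument via irreducible diagonal dominance is thus a genuine, and arguably cleaner, alternative to the original; it trades the combinatorial flow argument for a standard linear-algebra criterion, at the cost of having to verify irreducibility and the presence of at least one strictly dominant column in each nonterminal block.
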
 

A non-negative integer, called the deficiency, can be associated to each CRN. The \textbf{deficiency} of a CRN, denoted by $\delta$, is the integer defined by $\delta = n - \ell - s$. This index has been the center of many studies in CRNT due to its relevance in the dynamic behaviour of the system. In \cite{FEIN1979}, Feinberg provided a geometric interpretation of deficiency:  $\delta = \dim (\text{Ker } Y \cap \text{Im } I_a)$. From this fact and the STLK, the following result follows. 
\begin{corollary}[Cor. 4.12 \cite{FEIN1979}]\label{cor:dimkerYAk}
Let $\mathscr{N}=(\mathscr{S,C,R})$ be a CRN with deficiency $\delta$ and $t$ terminal strong linkage classes. Then for each $k \in \mathbb{R}^\mathscr{R}_{>0}$,
$$
\dim (\text{Ker } Y A_k) \leq \delta + t.
$$
\end{corollary}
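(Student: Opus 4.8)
The plan is to apply the rank--nullity theorem to the linear map $A_k$ restricted to the subspace $\text{Ker }(YA_k)\subseteq\mathbb{R}^\mathscr{C}$. First I would record two elementary containments. Since $A_k x = 0$ implies $YA_k x = 0$, we have $\text{Ker } A_k \subseteq \text{Ker }(YA_k)$; hence the kernel of the restricted map $A_k|_{\text{Ker }(YA_k)}$ is precisely $\text{Ker }(YA_k)\cap\text{Ker } A_k = \text{Ker } A_k$, which by the STLK (Proposition~\ref{th: STLK}) has dimension exactly $t$.

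Second, I would observe that $\text{Im } A_k \subseteq \text{Im } I_a$. This is read off directly from the defining formula $A_k x = \sum_{y\rightarrow y'\in\mathscr{R}} k_{y\rightarrow y'}\, x_y\,(\omega_{y'}-\omega_y)$ together with the definition of the incidence map, since each vector $\omega_{y'}-\omega_y$ equals $I_a(\omega_{y\rightarrow y'})$, so $A_k x$ is a linear combination of elements of $\text{Im } I_a$. Consequently, if $x \in \text{Ker }(YA_k)$ then $A_k x \in \text{Ker } Y$ and $A_k x \in \text{Im } I_a$, so the image of $A_k|_{\text{Ker }(YA_k)}$ is contained in $\text{Ker } Y \cap \text{Im } I_a$. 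By Feinberg's geometric interpretation of the deficiency, $\dim(\text{Ker } Y \cap \text{Im } I_a) = \delta$, whence this image has dimension at most $\delta$.

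Putting these together, rank--nullity applied to $A_k|_{\text{Ker }(YA_k)}$ gives
$$\dim \text{Ker }(YA_k) = \dim \text{Ker } A_k + \dim \text{Im}\big(A_k|_{\text{Ker }(YA_k)}\big) \leq t + \delta,$$
which is the assertion. I do not anticipate a genuine obstacle here: the argument is a short bookkeeping exercise in linear algebra once the two containments $\text{Ker } A_k \subseteq \text{Ker }(YA_k)$ and $\text{Im } A_k \subseteq \text{Im } I_a$ are in place. The only points needing a little care are checking that the kernel of the restriction is all of $\text{Ker } A_k$ (immediate from the first containment) and invoking the cited results correctly --- the STLK for the count $t$ and $\delta = \dim(\text{Ker } Y \cap \text{Im } I_a)$ for the count $\delta$.
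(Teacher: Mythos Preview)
Your argument is correct and matches the paper's approach: the paper does not spell out a proof but states that the corollary ``follows'' from the STLK together with Feinberg's geometric characterization $\delta = \dim(\text{Ker } Y \cap \text{Im } I_a)$, and these are precisely the two ingredients you combine via rank--nullity on $A_k|_{\text{Ker}(YA_k)}$. There is nothing to add.
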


By \textit{kinetics} of a CRN, we mean the assignment of a rate function to each reaction in the CRN. It is defined formally as follows.
\begin{definition}
A \textbf{kinetics} of a CRN $\mathscr{N}=(\mathscr{S},\mathscr{C},\mathscr{R})$ is an assignment of a rate function $\displaystyle{K_{j}: \Omega_K \to \mathbb{R}_{\geq 0}}$ to each reaction $R_j \in \mathscr{R}$, where $\Omega_K$ is a set such that $\mathbb{R}^{\mathscr{S}}_{>0} \subseteq \Omega_K \subseteq {\mathbb{R}}^{\mathscr{S}}_{\geq 0}$. A kinetics for a network $\mathscr{N}$ is denoted by $$\displaystyle{K=[K_1,K_2,...,K_r]^\top:\Omega_K \to {\mathbb{R}}^{\mathscr{R}}_{\geq 0}}.$$ The pair $(\mathscr{N},K)$ is called the \textbf{chemical kinetic system (CKS)}.
\end{definition}

The above definition is adopted from \cite{WIUF2013}. It is expressed in a more general context than what one typically finds in CRNT literature. For power law kinetic systems, one sets $\Omega_K =\mathbb{R}^{\mathscr{S}}_{>0}$. Here, we focus on the kind of kinetics relevant to our context: 

\begin{definition}
A \textbf{chemical kinetics} is a kinetics $K$ satisfying the positivity condition: 
$$
\text{For each reaction } R_j: y_j \rightarrow y_j' \in \mathscr{R},  \text{ } K_{j}(c)>0 \text{ if and only if } \text{supp } y_j \subset\text{supp }c.
$$
\end{definition}

\noindent Once a kinetics is associated with a CRN, we can determine the rate at which the concentration of each species evolves at composition $c \in \mathbb{R}^\mathscr{S}_{>0}$. 

\begin{definition}
The \textbf{species formation rate function}  of a chemical kinetic system is the vector field  
$$f(c) = NK (c) = \displaystyle\sum_{y_j\rightarrow y'_j \in \mathscr{R}}K_j(c) (y_j'- y_j).$$
\noindent The equation $dc/dt=f(c)$ is the \textbf{ODE or dynamical system} of the CKS.  A \textbf{positive equilibrium or steady state} $c^*$ is an element of $\mathbb{R}^\mathscr{S}_{>0}$ for which $f(c^*) = 0$. The set of positive equilibria of a chemical kinetic system is denoted by $E_+(\mathscr{N}, K)$. 
\end{definition}

Power law kinetics is defined by an  $r \times m$ matrix $F=[F_{ij}]$, called the \textbf{kinetic order matrix}, and vector $k \in \mathbb{R}^\mathscr{R}$, called the \textbf{rate vector}.  

\begin{definition}
A kinetics $K: \mathbb{R}^\mathscr{S}_{>0} \rightarrow \mathbb{R}^\mathscr{R}$ is a \textbf{power law kinetics} (PLK) if
$$\displaystyle K_{i}(x)=k_i x^{F_{i,\cdot}} \quad \forall i=1,\dots,r$$
with $k_i \in \mathbb{R}_{>0}$ and $F_{ij} \in \mathbb{R}$.  A PLK system has \textbf{reactant-determined kinetics} (of type \textbf{PL-RDK}) if for any two reactions $R_i$, $R_j \in \mathscr{R}$ with identical reactant complexes, the corresponding rows of kinetic orders in $F$ are identical, i.e., $F_{ik}=F_{jk}$ for $k = 1,...,m$. 
\end{definition}

An example of PL-RDK is the well-known \textbf{mass-action kinetics} (MAK), where the kinetic order matrix is the transpose of the matrix representation of the map of complexes $Y$ \cite{FEIN1979}. That is, a kinetics is a MAK if
$$
K_{j}(c)=k_{j}x^{Y_{.,j}} \quad \text{for all } R_j: y_j \rightarrow y'_j \in \mathscr{R}
$$
where $k_{j} \in \mathbb{R}_{>0}$, called rate constants. Note that $Y_{.,j}$ pertains to the stoichiometric coefficients of a reactant complex $y_j \in \mathscr{C}$. 

\begin{remark}
In \cite{AJMSM2015}, Arceo et al. discussed several sets of kinetics of a network and drew a ``kinetic landscape". They identified two main sets: the \textit{complex factorizable} kinetics and its complement, the \textit{non-complex factorizable} kinetics. Complex factorizable kinetics generalize the key structural property of MAK -- that is, the species formation rate function decomposes as 
$$
\dfrac{dx}{dt}= Y \circ A_k \circ \Psi_k,
$$
where $Y$ is the map of complexes, $A_k$ is the Laplacian map, and $\Psi_k: \mathbb{R}^\mathscr{S}_{\geq 0} \rightarrow  \mathbb{R}^\mathscr{C}_{\geq 0}$ such that $I_a \circ K(x) = A_k \circ \Psi_k(x)$ for all $x \in \mathbb{R}^\mathscr{S}_{\geq 0}$. In the set of power law kinetics, PL-RDK is the subset of complex-factorizable kinetics.  
\end{remark}

We recall the definition of the $m \times n$ matrix $\widetilde{Y}$ from the work of M\"{u}ller and Regensburger \cite{MURE2012,MURE2014}: For each reactant complex, the associated column of $\widetilde{Y}$ is the transpose of the kinetic order matrix row of the complex's reaction, otherwise (i.e., for non-reactant complexes), the column is 0. We form the \textbf{$\bm{T}$-matrix} of a PL-RDK system by truncating away the columns of the non-reactant complexes in $\widetilde{Y}$, obtaining an $m \times n_r$ matrix, where $n_r$ denotes the number of reactant complexes \cite{TAM2018}.

\section{Absolute Concentration Robustness in PL-RDK Systems}

To illustrate absolute concentration robustness, we consider the following toy model:

\begin{equation}
\begin{tikzpicture}[baseline=(current  bounding  box.center)]
\tikzset{vertex/.style = {shape=circle,draw,minimum size=1.5em}}
\tikzset{edge/.style = {->,> = latex}}
\node[vertex] (1) at  (0,0) {$X_1$};
\node[vertex] (2) at  (2,0) {$X_2$};
\node[draw=none,fill=none] (3) at (1,0) {};
\draw[edge]  (2.160) to (1.20);
\draw[edge]  (1.340) to (2.200);
\draw[edge,dashed]  (2,-0.47) to[out=-90,in=-90, looseness=1.5] (1,-0.17);
\end{tikzpicture}
\end{equation}
The map depicts a biochemical system involving transfer of material from two pools: $X_2$ to $X_1$ and $X_1$ to $X_2$, but with $X_2$ regulating the second process. Suppose the system evolves according to the following set of ODEs:
\begin{equation}\label{eq:ODE_toy}
\left.
  \begin{array}{rl}
\dot{X}_1 &= k_1X_2^{0.8} -k_2X_1^{0.5}X_2^{0.8} \\
\dot{X}_2 &= -k_1X_2^{0.8} + k_2X_1^{0.5}X_2^{0.8} 
\end{array}
 \right.
\end{equation}
The positive equilibrium of the system is attained when
\begin{equation}\label{eq:acr_toy}
X_1 = \left(\dfrac{k_1}{k_2}\right)^2 \quad \text{and} \quad
X_2 = \Gamma -\left(\dfrac{k_1}{k_2}\right)^2.
\end{equation}
where $\Gamma$ is the conserved amount of total material. These equations indicate that whenever $\Gamma>(k_1/k_2)^2$, a positive steady state exists. Furthermore, since $X_1$ has the same value in any steady state, the system exhibits ACR in $X_1$. 

We define absolute concentration robustness in PL-RDK systems as follows:

\begin{definition}\label{def:acr}
A PL-RDK system $(\mathscr{N},K)$ has \textbf{absolute concentration robustness(ACR) in species} $X_i \in \mathscr{S}$ if there exists $c^*\in E_+(\mathscr{N},K)$ and for every other $c^{**} \in E_+(\mathscr{N},K)$,  we have $c^{**}_i =c^*_{i}$.
\end{definition}

The following proposition adapts Theorem S3.15 found in supplementary online material of the paper of Shinar and Feinberg \cite{SHINAR2010} to deal with PL-RDK systems. 

\begin{proposition}\label{theorem:1}
Let $\mathscr{N}=(\mathscr{S,C,R})$ be a deficiency-one CRN. Suppose that $(\mathscr{N},K)$ is a PL-RDK system which admits a positive equilibrium $c^*$. If $y, y' \in \mathscr{C}$ are nonterminal complexes, then each positive equilibrium $c^{**}$ of the system satisfies the equation 
\begin{equation}\label{eq:theorem}
\left( T_{\cdot,y} - T_{\cdot,y'} \right) \cdot \log \left( \dfrac{c^{**}}{c^*} \right)=0.
\end{equation}
\end{proposition}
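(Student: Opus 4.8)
The plan is to exploit the complex-factorizability of PL-RDK systems recorded in the Remark: the species formation rate function factors as $f(c) = Y A_k \Psi_k(c)$, where the factor map $\Psi_k$ satisfies, for each reactant complex $y$, that $\Psi_k(c)_y$ is a positive monomial proportional to $c^{T_{\cdot,y}}$, and $\Psi_k(c)_y = 0$ for non-reactant complexes $y$. First I would record the resulting reformulation of the equilibrium condition: a composition $c \in \mathbb{R}^\mathscr{S}_{>0}$ lies in $E_+(\mathscr{N},K)$ if and only if $\Psi_k(c) \in \operatorname{Ker}(Y A_k)$. In particular, both $\Psi_k(c^*)$ and $\Psi_k(c^{**})$ belong to $\operatorname{Ker}(Y A_k)$.

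Next I would analyze $\operatorname{Ker}(Y A_k)$ using the deficiency-one hypothesis. By the STLK (Proposition \ref{th: STLK}), $\operatorname{Ker} A_k$ is $t$-dimensional with a basis $b^1, \dots, b^t$ satisfying $\operatorname{supp} b^i = \mathscr{C}^i$; in particular every vector of $\operatorname{Ker} A_k$ is supported on the terminal complexes. Since $\operatorname{Ker} A_k \subseteq \operatorname{Ker}(Y A_k)$ and, by Corollary \ref{cor:dimkerYAk}, $\dim \operatorname{Ker}(Y A_k) \le \delta + t = t + 1$, the subspace $\operatorname{Ker} A_k$ has codimension at most one inside $\operatorname{Ker}(Y A_k)$. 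The key observation is that $\Psi_k(c^*) \notin \operatorname{Ker} A_k$: a nonterminal complex is necessarily a reactant complex (immediate from the definitions of terminal strong linkage class and reactant complex), so the nonterminal complex $y$ satisfies $\Psi_k(c^*)_y > 0$, whereas every element of $\operatorname{Ker} A_k$ has vanishing $y$-component. Consequently $\operatorname{Ker}(Y A_k) = \operatorname{Ker} A_k \oplus \operatorname{span}\{\Psi_k(c^*)\}$.

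Then I would write $\Psi_k(c^{**}) = w + \mu\, \Psi_k(c^*)$ with $w \in \operatorname{Ker} A_k$ and $\mu \in \mathbb{R}$. Evaluating the $y$-coordinate, where $w_y = 0$, gives $\mu = \Psi_k(c^{**})_y / \Psi_k(c^*)_y = (c^{**}/c^*)^{T_{\cdot,y}}$; evaluating the $y'$-coordinate (again a nonterminal, hence reactant, complex) gives likewise $\mu = (c^{**}/c^*)^{T_{\cdot,y'}}$. Equating the two expressions for $\mu$ and taking logarithms yields $T_{\cdot,y}\cdot \log(c^{**}/c^*) = T_{\cdot,y'}\cdot \log(c^{**}/c^*)$, which is exactly \eqref{eq:theorem}.

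The main obstacle is pinning down the correct factorization $f = Y A_k \Psi_k$ for PL-RDK and identifying the reactant-complex coordinates of $\Psi_k$ with the monomials $c^{T_{\cdot,y}}$; once this is in hand, what remains is the deficiency-one linear algebra above, whose only delicate point is verifying that $\Psi_k(c^*)$ escapes $\operatorname{Ker} A_k$ (equivalently, that every nonterminal complex is a reactant complex), which is what promotes the codimension-$\le 1$ bound from Corollary \ref{cor:dimkerYAk} to the genuine direct-sum decomposition used in the final step.
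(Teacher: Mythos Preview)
Your proposal is correct and follows essentially the same route as the paper's proof: both use the STLK and Corollary~\ref{cor:dimkerYAk} to get $\operatorname{Ker}(YA_\bullet)=\operatorname{Ker}A_\bullet\oplus\operatorname{span}\{\text{reference vector}\}$, then read off the desired identity at the nonterminal coordinates. The only cosmetic difference is that the paper first rescales the rate constants to $\kappa_{y\to y'}:=k_{y\to y'}(c^*)^{\widetilde{y}}$ so that the reference vector becomes the all-ones vector $\bm{1}^\mathscr{C}$ and the second equilibrium becomes $\sum_{y} e^{\widetilde{y}\cdot\mu}\omega_y$, whereas you keep the original $k$ and use $\Psi_k(c^*)$ and $\Psi_k(c^{**})$ directly; the two formulations are related by a coordinatewise rescaling of $\mathbb{R}^\mathscr{C}$ and lead to the same computation.
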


We largely reproduce the proof of Shinar and Feinberg in the said supplementary material of their paper. Since in their proof, the sums are often taken over all complexes, we use the notation of M\"{u}ller and Regensburger in \cite{MURE2012,MURE2014}:
$$
\widetilde{Y} = \left[ 
\begin{array}{c|c}
\text{ }T \text{ }& \text{ }  0 \text{ }\\
\end{array}
\right],
$$
adjoining $n-n_r$ zero columns for the non-reactant complexes, where $n_r$ denotes the number of reactant complexes. Furthermore, we write $\widetilde{y}$ for $\widetilde{Y}_{\cdot,y}$.

\begin{proof}
Assume that $c^*$ is a positive steady state of the PL-RDK system $(\mathscr{N},K)$. That is,
\begin{equation}\label{eq:c*}
\sum_{y \rightarrow y' \in \mathscr{R}} k_{y \rightarrow y'} (c^*)^{\widetilde{y}} (y'-y)=0.
\end{equation}
For each $y \rightarrow y' \in \mathscr{R}$, define the positive number $\kappa_{y \rightarrow y'}$ by
\begin{equation}\label{eq:kappa}
\kappa_{y \rightarrow y'}:=k_{y \rightarrow y'}(c^*)^{\widetilde{y}}.
\end{equation}
Thus, we obtain
\begin{equation}\label{eq:1}
\sum_{y \rightarrow y' \in \mathscr{R}} \kappa_{y \rightarrow y'} (y'-y)=0.
\end{equation}
Suppose that $c^{**}$ is also a positive equilibrium of the system. Hence,
\begin{equation}\label{eq:c**}
\sum_{y \rightarrow y' \in \mathscr{R}} k_{y \rightarrow y'} (c^{**})^{\widetilde{y}} (y'-y)=0.
\end{equation}
Define 
\begin{equation}\label{eq:mu}
\mu := \log c^{**} - \log c^*.
\end{equation}
With $\kappa \in \mathbb{R}^\mathscr{R}_{>0}$ given by Equation (\ref{eq:kappa}) and $\mu$ given by Equation (\ref{eq:mu}), it follows from Equation (\ref{eq:c**}) that 
\begin{equation}\label{eq:2}
\sum_{y \rightarrow y' \in \mathscr{R}} \kappa_{y \rightarrow y'} e^{\widetilde{y}\cdot \mu} (y'-y)=0.
\end{equation}
Let $\bm{1}^\mathscr{C} \in \mathbb{R}^\mathscr{C}$ such that 
$$
\bm{1}^\mathscr{C}  = \sum_{y \in \mathscr{C}} \omega_y.
$$
Observe that Equations (\ref{eq:1}) and  (\ref{eq:2}) can be respectively written as
$$
Y A_\kappa \bm{1}^\mathscr{C} =0, \text{ and }
Y A_\kappa \left( \sum_{y \in \mathscr{C}} e^{\widetilde{y}\cdot \mu} \omega_y \right) = 0.
$$
Equivalently, 
\begin{equation} \label{eq: w_c in Ker YAk}
\bm{1}^\mathscr{C} \in \text{Ker }YA_\kappa , \text{ and}
\end{equation}
\begin{equation} \label{eq: Sum in Ker YAk}
\sum_{y \in \mathscr{C}} e^{\widetilde{y}\cdot \mu}  \omega_y \in \text{Ker }YA_\kappa .
\end{equation}
Therefore, $c^*$ and $c^{**}$ are positive equilibria of the PL-RDK system $(\mathscr{N},K)$ if and only if Equations (\ref{eq: w_c in Ker YAk}) and (\ref{eq: Sum in Ker YAk}) hold. From Corollary \ref{cor:dimkerYAk}, we have 
\begin{equation}\label{eq:<=1+t}
\dim (\text{Ker } Y A_\kappa) \leq 1+t
\end{equation}
for the CRN under consideration. Let $\{ b^1,b^2,\dots,b^t \} \subset \mathbb{R}^\mathscr{C}_{\geq 0}$ be a basis for $\text{Ker }A_\kappa$ as in Proposition \ref{th: STLK} (STLK). Since $\text{Ker }A_\kappa \subseteq \text{Ker } YA_\kappa$, this basis of $\text{Ker }A_\kappa $ can be extended to form a basis of $\text{Ker } YA_\kappa$. Recall from Equation (\ref{eq: w_c in Ker YAk}) that $\bm{1}^\mathscr{C}$ is in $\text{Ker }YA_\kappa$. We assert that the set 
$
\{
\bm{1}^\mathscr{C}, b^1,b^2,\dots,b^t
\}
$
is a basis for $\text{Ker }YA_\kappa$ (and hence, equality holds in Equation (\ref{eq:<=1+t})). This follows if 
\begin{equation}\label{eq: not in Ker Ak}
\bm{1}^\mathscr{C} \notin \text{Span } \{b^1,b^2, \dots, b^t \}.
\end{equation}
From Proposition \ref{th: STLK}, every element of $\text{Ker }A_\kappa$ must have its support contained entirely in the set of terminal complexes. However, the support of $\bm{1}^\mathscr{C}$ consists of all complexes. By assumption, there are nonterminal complexes and hence, $\bm{1}^\mathscr{C}$ cannot lie in $\text{Ker }A_\kappa$ (i.e., Equation (\ref{eq: not in Ker Ak}) holds). \\

From Equation (\ref{eq: Sum in Ker YAk}), there exist scalars $\lambda_0,\lambda_1,\dots,\lambda_t$ such that 
\begin{equation}\label{eq:S27}
\sum_{y \in \mathscr{C}} e^{\widetilde{y}\cdot \mu}  \omega_y = \lambda_0 \bm{1}^\mathscr{C} + \sum_{i=1}^t \lambda_i b^i.
\end{equation}

\noindent Observe that each vector $b^i$, $i=0,1,\dots,t$, has its support entirely on terminal complexes. This fact, along with Equation (\ref{eq:S27}), implies that for each pair of nonterminal complexes $y \in \mathscr{C}$ and $y' \in \mathscr{C}$, we have
\begin{equation}\label{eq:S28}
\widetilde{y}\cdot\mu = \widetilde{y'}\cdot\mu .
\end{equation}
Since $y$ and $y'$ are nonterminal, they are reactant complexes. Hence, Equation (\ref{eq:S28}) may be written as
\begin{equation}
T_{\cdot,y}\cdot\mu = T_{\cdot,y'}\cdot\mu,
\end{equation}
which is equivalent to Equation (\ref{eq:theorem}) in Theorem \ref{theorem:1}.
\end{proof}

The extension of the Shinar-Feinberg Theorem on ACR to PL-RDK systems is stated as follows. 
\begin{theorem}
Let $\mathscr{N}=(\mathscr{S,C,R})$ be a deficiency-one CRN and suppose that $(\mathscr{N},K)$ is a PL-RDK system which admits a positive equilibrium.  If $y, y' \in \mathscr{C}$ are nonterminal complexes whose kinetic order vectors  differ only in species $X_i$, then the system has ACR in $X_i$.
\end{theorem}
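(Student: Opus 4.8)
The plan is to derive the theorem as an almost immediate corollary of Proposition~\ref{theorem:1}. Since $(\mathscr{N},K)$ is a deficiency-one PL-RDK system admitting a positive equilibrium, and $y, y'$ are nonterminal complexes, Proposition~\ref{theorem:1} applies verbatim: every positive equilibrium $c^{**}$ satisfies $\left(T_{\cdot,y} - T_{\cdot,y'}\right) \cdot \log(c^{**}/c^*) = 0$, where $c^*$ is the fixed reference equilibrium. The entire remaining work is to interpret the hypothesis ``the kinetic order vectors of $y$ and $y'$ differ only in species $X_i$'' in terms of the $T$-matrix.

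First I would recall that, by the definition of the $T$-matrix (the truncation of $\widetilde{Y}$ to reactant columns), the column $T_{\cdot,y}$ is precisely the transpose of the kinetic order row $F_{j,\cdot}$ for any reaction $R_j$ whose reactant complex is $y$; this is well-defined exactly because the system is PL-RDK, so all reactions out of $y$ share the same kinetic order vector. Hence the hypothesis says $T_{k,y} = T_{k,y'}$ for all $k \neq i$, while $T_{i,y} \neq T_{i,y'}$. Therefore the difference vector $T_{\cdot,y} - T_{\cdot,y'}$ is a nonzero scalar multiple of the standard basis vector $\omega_{X_i}$, say $T_{\cdot,y} - T_{\cdot,y'} = \alpha\,\omega_{X_i}$ with $\alpha = T_{i,y} - T_{i,y'} \neq 0$.

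Substituting into Equation~\eqref{eq:theorem} then gives $\alpha\left(\log c^{**}_i - \log c^*_i\right) = 0$, and since $\alpha \neq 0$ we conclude $\log c^{**}_i = \log c^*_i$, i.e., $c^{**}_i = c^*_i$. As $c^{**}$ was an arbitrary positive equilibrium, Definition~\ref{def:acr} is satisfied and the system has ACR in $X_i$. I would close by noting that MAK is the special case where $T = Y$ restricted to reactant columns, so two nonterminal complexes differing only in species $X_i$ (in the sense of stoichiometric coefficients) recovers the original Shinar–Feinberg statement.

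The only genuine subtlety---and the step I would be most careful about---is the translation between ``kinetic order vector'' and ``column of $T$,'' since a nonterminal complex $y$ may be the reactant of several reactions; one must invoke the PL-RDK (reactant-determined) property to guarantee that ``the kinetic order vector of $y$'' is unambiguous, and one must also note that nonterminal complexes are necessarily reactant complexes (they have outgoing arcs), so the corresponding columns are genuinely present in $T$ rather than among the truncated zero columns. Everything else is a one-line substitution, so there is no real computational obstacle.
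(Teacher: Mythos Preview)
Your proposal is correct and follows essentially the same route as the paper's own proof: invoke Proposition~\ref{theorem:1}, observe that the hypothesis forces $T_{\cdot,y} - T_{\cdot,y'}$ to be a nonzero multiple of the basis vector for $X_i$, and conclude $c^{**}_i = c^*_i$ from Equation~\eqref{eq:theorem}. Your added remarks on why nonterminal complexes are reactant complexes and why PL-RDK makes the kinetic order vector unambiguous are useful clarifications that the paper leaves implicit, but the core argument is identical.
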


\begin{proof}
Suppose $c^*$ and $c^{**}$ are positive equilibria of the PL-RDK system $(\mathscr{N},K)$. Observe that since $y, y' \in \mathscr{C}$ are nonterminal complexes whose kinetic order vectors differ only in species $X_i$, we have
$$
T_{\cdot,y} - T_{\cdot,y'} = aX_i 
$$
for some nonzero $a$. Thus Equation (\ref{eq:theorem}) reduces to 
$$
a(\log c^*_i - \log c^{**}_i)= 0.
$$
It follows that 
$$
c^*_i = c^{**}_i.
$$
That is, the system has ACR in species $X_i$.
\end{proof}

The ODE system in Equation (\ref{eq:ODE_toy}) can be translated into a dynamically equivalent CRN with associated kinetic order matrix by employing the notion of \textbf{total CRN representation of Generalized Mass Action (GMA) systems}, proposed by Arceo et al. \cite{AJMSM2015}. GMA system is a canonical framework used in Biochemical Systems Theory (BST) wherein every mass transfer rate is approximated separately with a power law term, and these terms are added together, with a plus sign for incoming fluxes and a minus sign for outgoing fluxes \cite{VOIT2000,VOIT2013}. For BST-related concepts, the reader may refer to the BST tutorial in the Appendix of Arceo et al. \cite{AJLM2017}. 

The total CRN representation of a GMA system allows for the CRN-based analysis of the dynamical system. Viewed as a GMA system, the set of ODEs in (\ref{eq:ODE_toy})  has the following total CRN representation:

\begin{equation}\label{eq:toy_CRN}
\left.
  \begin{array}{rrcl}
R_1 :& X_2 &\xrightarrow{k_1} & X_1 \\
R_2 :& X_1+X_2 &\xrightarrow{k_2} &2X_2
  \end{array}
 \right.
\end{equation}
with associated kinetic order matrix $F$ given by
$$ 
F=  
\kbordermatrix{
    & X_1 & X_2  \\
    R_1 & 0 & 0.8   \\
    R_2 & 0.5 & 0.8   \\
}.
$$
The CRN in (\ref{eq:toy_CRN}) is a deficiency-one network with nonterminal complexes $X_1+X_2$ and $X_2$ whose kinetic order rows differ only in $X_1$. The previous theorem indicates ACR in $X_1$, which agrees with the computation in (\ref{eq:acr_toy}). 

The following simple proposition provides some examples for the ACR theorem for PL-RDK systems. As preparation, we recall some notions from \cite{AJLM2018,TAM2018} which are used in the result. A PL-RDK is said to be \textbf{reactant set linear independent} (of type \textbf{PL-RLK}) if  the columns of $T$ are linearly independent. We also recall the \textbf{reactant matrix} $Y_\text{res}$, which is obtained from the matrix representation of $Y$ by removing the columns corresponding to non-reactant complexes.  Its image $\text{Im } Y_{\text{res}}$ is called the \textbf{reactant subspace} $R$, whose dimension $q$ is called the \textbf{reactant rank} of the CRN. The \textbf{reactant deficiency} $\delta_\rho$ is the difference between the number of reactant complexes $n_r$ and the reactant rank $q$.

\begin{proposition}
Let $(\mathscr{S,C,R})$ be a deficiency one reaction network, which with PL-RDK, admits a positive equilibrium. Suppose the network has zero reactant deficiency, two nonterminal complexes $y,y' \in \mathscr{C}$ differing only in $X_j$ and the map 
$$
\widehat{y}:= T \circ Y_\text{res}^{-1}: R  \rightarrow \text{Im } T
$$
is given by
$$
\widehat{y}(X_1,\dots,X_j,\dots, X_m) = (a_1X_1,\dots,a_jX_j\dots,a_mX_m), a_i \neq 0.
$$
Then the system is PL-RLK and has ACR in $X$.
\end{proposition}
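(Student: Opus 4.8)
The plan is to prove the statement in two stages: first derive the PL-RLK property from the linear-algebraic hypotheses, and then verify the hypotheses of the preceding theorem (the PL-RDK extension of the Shinar-Feinberg ACR result) and quote it. I would open by unpacking the zero-reactant-deficiency assumption: $\delta_\rho = n_r - q = 0$ forces the reactant matrix $Y_\text{res}:\mathbb{R}^{n_r}\to\mathbb{R}^\mathscr{S}$ to have full column rank $n_r$, hence to be injective and to restrict to a linear isomorphism onto the reactant subspace $R=\text{Im }Y_\text{res}$. This is exactly what makes $Y_\text{res}^{-1}:R\to\mathbb{R}^{n_r}$, and with it $\widehat{y}=T\circ Y_\text{res}^{-1}$, meaningful.

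Next I would show the system is PL-RLK, i.e., that the $n_r$ columns of $T$ are linearly independent. By hypothesis $\widehat{y}$ acts on $R$ as the diagonal map $(X_1,\dots,X_m)\mapsto(a_1X_1,\dots,a_mX_m)$ with all $a_i\neq 0$, which is the restriction to $R$ of an invertible linear map on $\mathbb{R}^\mathscr{S}$; hence $\widehat{y}:R\to\text{Im }T$ is injective. From $\widehat{y}=T\circ Y_\text{res}^{-1}$ we obtain $T=\widehat{y}\circ Y_\text{res}$ as linear maps on $\mathbb{R}^{n_r}$, a composition of two injective maps, so $\text{Ker }T=\{0\}$. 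Thus the columns of $T$ are linearly independent and $(\mathscr{N},K)$ is reactant set linearly independent.

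For the ACR conclusion I would apply the preceding theorem. Because the nonterminal complexes $y,y'$ are reactant complexes (a nonterminal complex is necessarily a reactant complex, as used in the proof of Proposition~\ref{theorem:1}), the columns $T_{\cdot,y}$ and $T_{\cdot,y'}$ exist, and by construction of $\widehat{y}$ each is the image under $\widehat{y}$ of the corresponding column of $Y_\text{res}$, i.e., of the stoichiometric vector of that complex. Writing $y=(y_1,\dots,y_m)$ and $y'=(y'_1,\dots,y'_m)$ for those vectors, the diagonal form of $\widehat{y}$ gives $T_{\cdot,y}=(a_1y_1,\dots,a_my_m)$ and $T_{\cdot,y'}=(a_1y'_1,\dots,a_my'_m)$. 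Since $y$ and $y'$ differ only in $X_j$, we have $y_i=y'_i$ for $i\neq j$ and $y_j\neq y'_j$, so
\[
T_{\cdot,y}-T_{\cdot,y'}=a_j(y_j-y'_j)\,X_j,\qquad a_j(y_j-y'_j)\neq 0.
\]
Hence the kinetic order vectors of $y$ and $y'$ differ only in $X_j$, and together with the standing hypotheses (deficiency one, PL-RDK, existence of a positive equilibrium) the preceding theorem yields ACR in $X_j$, the species denoted $X$ in the statement.

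This is essentially a bookkeeping argument, so I do not foresee a real obstacle. The only points that deserve care are the well-definedness of $Y_\text{res}^{-1}$ and the identity $T_{\cdot,y}=\widehat{y}\bigl((Y_\text{res})_{\cdot,y}\bigr)$, both of which rest on $\delta_\rho=0$, together with the (standard) observation, already invoked in the proof of Proposition~\ref{theorem:1}, that a nonterminal complex is a reactant complex.
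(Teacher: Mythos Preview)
Your proposal is correct and follows essentially the same approach as the paper's proof: show that $\widehat{y}$ is injective so that $T=\widehat{y}\circ Y_{\mathrm{res}}$ has linearly independent columns (PL-RLK), and then compute $T_{\cdot,y}-T_{\cdot,y'}=a_j(y_j-y'_j)X_j$ to invoke Theorem~1. The paper's argument is the same but stated much more tersely; your version simply makes explicit the role of $\delta_\rho=0$ in defining $Y_{\mathrm{res}}^{-1}$ and the fact that nonterminal complexes are reactant complexes.
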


\begin{proof}
Since $\widehat{y}$ is an isomorphism, $T=\widehat{y} \circ Y_\text{res}$ is also an isomorphism. This implies that the system is PL-RLK. The kinetic order vector difference of $y$ and $y'$ is $(0,\dots,ka_j,\dots,0)$ for some nonzero real $k$ so that Theorem 1's condition is fulfilled.
\end{proof}

\section{Application to a Carbon Cycle Model}

The pre-industrial carbon cycle model of Anderies et al. \cite{ANDERIES} is a simple mass balance which involves three interacting carbon pools: land, atmosphere and ocean. Pictorially, the system can be depicted using a biochemical map comprised of nodes that represent carbon pools, solid arrows that indicate transfer of carbon, and dashed arrows that indicate if a pool affects or modulates a process. Figure \ref{fig:ANPRIMap} presents the biochemical map of the model of interest. \\
\begin{figure}[!ht]
\centering
    \includegraphics[width=0.6\textwidth]{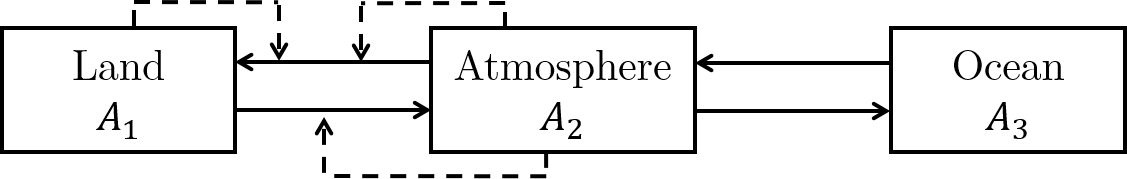}
      \caption{Biochemical map of the pre-industrial carbon cycle model of Anderies et al. \cite{ANDERIES}}  \label{fig:ANPRIMap}
\end{figure}

In our previous work \cite{FORTUNa}, we reviewed the model's design and underlying assumptions and described the parameters and ODEs present in the pre-industrial state of the carbon cycle model. We also approximated all rate processes by products of power law functions in order to obtain a GMA system approximation of the original system. The resulting ODEs of the approximation is given in (\ref{eq:GMA ODEs}):
\begin{equation}\label{eq:GMA ODEs}
\left.
  \begin{array}{cl}
\dot{A}_1 &= k_1 A_1^{p_1}A_2^{q_1} - k_2 A_1^{p_2}A_2^{q_2} \\[0.5em]
\dot{A}_2 &= k_2 A_1^{p_2}A_2^{q_2} - k_1 A_1^{p_1}A_2^{q_1} - a_m A_2 + a_m \beta A_3   \\[0.5em]
\dot{A}_3 &= a_m A_2 - a_m \beta A_3,
  \end{array}
 \right \}.
\end{equation} 

We also obtained in \cite{FORTUNa}, using total CRN representation of \cite{AJMSM2015}, the following deficiency-one CRN representation for the model: 
\begin{equation}
\left.
  \begin{array}{rcl}
A_1 + 2A_2 &\rightarrow &2A_1 + A_2 \\
A_1 + A_2 &\rightarrow & 2A_2 \\
A_2 &\rightleftarrows & A_3 \\
  \end{array}
 \right.
\end{equation}
Its associated kinetic order matrix is the transpose of the following $T$-matrix:
\begin{equation}
T= \begin{blockarray}{ccccl}
A_1 + 2A_2 & A_1 + A_2 & A_2 & A_3    \\
\begin{block}{[cccc]l}
 	p_1 & p_2 & 0 & 0 & A_1 \\
     q_1 & q_2 & 1 & 0 & A_2 \\
     0 & 0 & 0 & 1 &  A_3  \\
 \end{block}
\end{blockarray}.
\end{equation}

In the Appendix, it is shown that there is a scenario in the pre-industrial state leading to a GMA system approximation such that the kinetic order vectors of the nonterminal vertices $A_1 + 2A_2$ and $A_1 + A_2$ differ only in $A_2$; that is, $p_1- p_2=0$ and $q_1 - q_2 \neq 0$. In particular, this occurs when the human terrestrial carbon off-take term (which accounts for human activities that reduce the capacity of terrestrial pool to capture carbon such deforestation and land-use change) vanishes. Assuming the existence of a steady state, Theorem 1 indicates that the system has ACR in $A_2$. In fact, when $p_1=p_2$, steady state computation of the system in (\ref{eq:GMA ODEs}) yields the following equilibria set for the system:
$$
E_+(\mathscr{N},K)= 
\left\lbrace
\left[ 
\begin{array}{c}
A_1 \\
A_2 \\
A_3 \\  
\end{array} 
\right] \in \mathbb{R}^\mathscr{S}_{>0} \;\middle|\;  
\begin{array}{ll}
A_2 &=
\left(
\dfrac{k_2}{k_1} \right)^{\frac{1}{q_1-q_2}}, \\
A_3 &=\dfrac{1}{\beta}\left(
\dfrac{k_2}{k_1} \right)^{\frac{1}{q_1-q_2}}, \text{ and}\\
A_1 &= A_0 - \left( 1+ \dfrac{1}{\beta} \right) \left(
\dfrac{k_2}{k_1} \right)^{\frac{1}{q_1-q_2}}
\end{array}
\right\rbrace,
$$
where $A_0=$ total conserved carbon at pre-industrial state. 

\section{Conclusion and Outlook}

In conclusion, we summarize our results and outline some perspectives for further research.

\begin{enumerate}
\item We modified the Shinar-Feinberg Theorem on ACR for mass-action systems to include PL-RDK systems, a kinetic system more general than mass-action systems.
\item The theorem is applied to a power law approximation of Anderies et al.'s Earth's carbon cycle in its pre-industrial state. The analysis reveals that there is a scenario in the pre-industrial state which yields a power law approximation where there is ACR in the atmospheric carbon pool.  Specifically, the power law approximation leads to an ACR-possessing PL-RDK system when the human off-take coefficient, which accounts for the which accounts for human activities that reduce the capacity of terrestrial pool to sequester carbon, vanishes.
\item The investigation of other forms of ``concentration robustness" identified by Dexter et al. \cite{DEXTER2015} for PL-RDK systems offers a further interesting research perspective.
\item The extension of the stochastic analysis of CRNs with ACR of Anderson et al. \cite{AEJ2014} for PL-RDK systems is another promising area for further investigation. 
\end{enumerate}

\section*{Acknowledgements}
NTF acknowledges the support of the Department of Science and Technology-Science Education Institute (DOST-SEI), Philippines through the ASTHRDP Scholarship grant and Career Incentive Program (CIP). ARL and LFR held research fellowships from De La Salle University and would like to acknowledge the support of De La Salle University's Research Coordination Office. 
%
%
%

%

\appendix
\section{Pre-industrial Carbon Cycle Model of Anderies et al.}\label{sec:A}

The complete set of ODEs for the pre-industrial state is given by
\begin{equation}\label{eq:ODE of preindustrial}
\left.
  \begin{array}{cl}
\dot{A}_1 &= r_{tc} [P(t)-R(t)] A_1 \left[ 1- \frac{A_1}{k} \right] - \alpha A_1 \\
\dot{A}_2 &= r_{tc} [R(t)-P(t)] A_1 \left[ 1- \frac{A_1}{k} \right] + \alpha A_1 - a_m A_2 + a_m \beta A_3 \\
\dot{A}_3 &= a_m A_2 - a_m \beta A_3 .
  \end{array}
 \right \}
\end{equation} 
where 
\begin{align*}
P(t) &= a_f A_2(t)^{b_f} \cdot \left[ a_p \cdot (a_T A_2(t) + b_T)^{b_p} \cdot e^{-c_p \cdot (a_T A_2(t) + b_T)} \right] \\
R(t)&=\left[ a_r \cdot (a_T A_2(t) + b_T)^{b_r} \cdot e^{-c_r \cdot (a_T A_2(t) + b_T)} \right] .
\end{align*}

For the description of the parameters, the reader is referred to \cite{ANDERIES} and the Appendix of \cite{FORTUNa}. The parameter values are identical to the values used in \cite{FORTUNa} but with $\alpha=0$. This particular parameter is assigned as the human terrestrial carbon off-take rate. It is associated to human activities such as clearing, burning or farming, which reduce the capacity of land to capture carbon. 

A power law approximation of the ODE system at an operating point is obtained to generate a Generalized Mass Action (GMA) System \cite{VOIT2000,VOIT2013}. Mathematically, GMA system approximation is equivalent to Taylor approximation up to the linear term in logarithmic space. The function $V(X_1,X_2,\dots,X_m)$ can be approximated by $\displaystyle{V = \alpha X_1^{p_1} X_2^{p_2} \cdots X_m^{p_m}}$ at an operating point where 
\begin{equation}\label{eq:formula}
p_i =\dfrac{\partial V}{\partial X_i} \cdot \dfrac{X_i}{V} \text{ and }
\alpha = V(X_1,X_2,\dots,X_m)X_1^{-p_1}X_2^{-p_2}\cdots X_m^{-p_m}.
\end{equation}

Table \ref{tab:power law approx} presents the four carbon fluxes present in the pre-industrial state of the Anderies et al. model, and their corresponding rate functions. Furthermore, the last column lists their respective target power law approximation. The last two functions, $a_m A_2$ and $a_m \beta A_3$, are already in the desired format and are thus, kept as is. To compute for the kinetic orders (and rate constants), we apply (\ref{eq:formula}). By taking the parameter values used in \cite{FORTUNa} but with $\alpha=0$, and assuming the initial values to be $A_1 =2850/4500$, $A_2=750/4500$ and $A_3=900/4500$ (as in\cite{ANDERIES}), the ODE system in (\ref{eq:ODE of preindustrial}) reaches the following steady state:
$
 A_1 = 0.7, \quad A_2 = 0.15\text{ and } A_3 = 0.15.
$  

\begin{table}[ht!]
\small
\begin{center}
\begin{tabular}{|lll|}
\hline
Carbon Flux & Function &  Power law approx. \\
\hline
$A_2 \rightarrow A_1$ & $K_1= r_{tc} P(t) A_1 \left[ 1- \frac{A_1}{k} \right]$ & $k_1 A_1^{p_1}A_2^{q_1}$  \\
$A_1 \rightarrow A_2$ & $K_2 = r_{tc} R(t) A_1 \left[ 1- \frac{A_1}{k} \right] + \alpha A_1$  & $k_2 A_1^{p_2}A_2^{q_2}$  \\
$A_2 \rightarrow A_3$ & $ K_3 = a_m A_2 $ & $ a_m A_2 $ \\
$A_3 \rightarrow A_2$ & $K_4 =a_m \beta A_3$ & $a_m \beta A_3$ \\
\hline
\end{tabular}
\caption{Power law approximation of the process rates.}\label{tab:power-law approx}
\end{center}
\end{table}

The algebraic calculations are implemented in Mathematica as shown in Figure \ref{fig:codes}. When $\alpha=0$ (i.e., the human off-take term vanishes), 
$$
p_1=p_2=\dfrac{2A_1-k}{A_1-k}.
$$
For the power law approximation, we choose values close to the equilibrium point as operating point: $A_1=0.69$, $A_2=0.155$ and $A_3=0.155$. Consequently, we obtain 
\begin{equation}\label{eq:p,q,k}
\left.
  \begin{array}{clcl}
		p_1 &=-68, &\quad   p_2 &=-68, \\
		q_1 &=0.580148, &\quad   q_2 &=0.910864.
  \end{array}
 \right.
\end{equation}
 \begin{figure}[!ht]
\centering
    \includegraphics[width=0.55\textwidth]{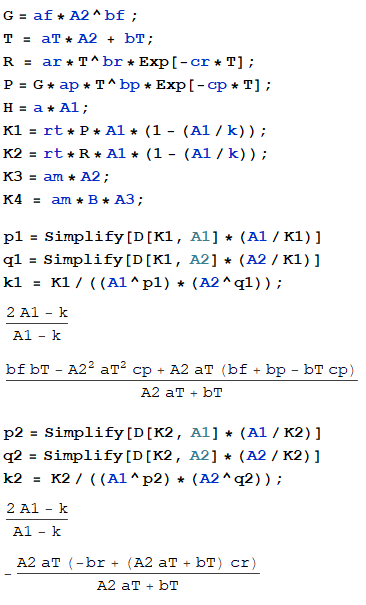}
      \caption{Mathematica codes.}  \label{fig:codes}
\end{figure}

\end{document}